\def\draft{n}
\theoremstyle{plain}
\newtheorem{theorem}{Theorem}[section]
\newtheorem{proposition}{Proposition}[section]
\newtheorem{lemma}[proposition]{Lemma}
\theoremstyle{definition}
\newtheorem{definition}[proposition]{Definition}
\theoremstyle{remark}
\newtheorem{example}[proposition]{Example}
\newtheorem{remark}[proposition]{Remark}
\def\printname#1{
        \if\draft y
                \smash{\makebox[0pt]{\hspace{-0.5in}
                        \raisebox{8pt}{\tt\tiny #1}}}
        \fi
}
\newlength{\standardunitlength}
\long\def\@makecaption#1#2{%
     \vskip 10pt

\setbox\@tempboxa\hbox{
       \small\sf{\bfcaptionfont #1. }\ignorespaces #2}%
     \ifdim \wd\@tempboxa >\captionwidth {%
         \rightskip=\@captionmargin\leftskip=\@captionmargin
         \unhbox\@tempboxa\par}%
       \else
         \hbox to\hsize{\hfil\box\@tempboxa\hfil}%
     \fi}
\font\bfcaptionfont=cmssbx10 scaled \magstephalf
\newdimen\@captionmargin\@captionmargin=2\parindent
\newdimen\captionwidth\captionwidth=\hsize
\def\lbl#1{\label{#1}\printname{#1}}
\def\BN{\mathbb N}
\def\BZ{\mathbb Z}
\def\BQ{\mathbb Q}
\def\e{\epsilon}
\def\ef{\equiv_f}
 \def\LaTeX{\leavevmode L\raise.42ex
   \hbox{\kern-.3em\size{\sf@size}{0pt}\selectfont A}\kern-.15em\TeX}
\newcommand{\BibTeX}{{\rm B\kern-.05em{\sci\kern-.025emb}\kern-.08em\TeX}}
\newcommand{\modo}[2]{#1 \equiv_f #2}
\newcommand{\lcm}{\mathrm{lcm}}
\newcommand{\Rec}[1]{\mathrm{\bf REC}_{\bf #1}}
\definecolor{red}{rgb}{1.000,0.000,0.000}
\definecolor{green}{rgb}{0.000,0.000,1.000}
\definecolor{blue}{rgb}{0.000,0.000,1.000}
\definecolor{orange}{rgb}{0.800,0.600,0.000}
\definecolor{cyan}{rgb}{0.000,0.600,0.600}
\definecolor{magenta}{rgb}{1.000,0.000,1.000}
\begin{document}


\title[A new algorithm for the recursion of multisums with 
improved universal denominator]
{A new algorithm for the recursion of hypergeometric multisums with 
improved universal denominator}

\author{Stavros Garoufalidis}
\address{School of Mathematics \\
         Georgia Institute of Technology \\
         Atlanta, GA 30332-0160, USA \\ 
         {\tt http://www.math.gatech} \newline {\tt .edu/$\sim$stavros } }
\email{stavros@math.gatech.edu}
\author{Xinyu Sun}
\address{Department of Mathematics \\
        Tulane University \\
        6823 St. Charles Ave \\
        New Orleans, LA 70118, USA \\
{\tt http://} \newline {\tt www.math.tulane.edu/$\sim$xsun1}}
\email{xsun1@tulane.edu}

\thanks{S.G. was supported in part by National Science 
Foundation. \\
\newline
{\em Mathematics Classification.} Primary 33F10. Secondary 05E99.
\newline
{\em Keywords and phrases:
WZ-algorithm, Creative Telescoping, Gosper's algorithm, 
	Zeilberger's algorithm, hypergeometric, multisum, recursion,
	Abramov's algorithm, universal denominator.
}
}

\date{July 8, 2009}


\begin{abstract}
The purpose of the paper is to introduce two new algorithms. The first one
computes a linear recursion for proper hypergeometric multisums, by treating
one summation variable at a time, and provides rational certificates along 
the way. A key part in the search of a linear recursion is an improved 
universal denominator algorithm that constructs all rational solutions
$x(n)$ of the equation
$$
\frac{a_m(n)}{b_m(n)}x(n+m)+\cdots+\frac{a_0(n)}{b_0(n)}x(n)=
c(n),$$
where $a_i(n), b_i(n), c(n)$ are polynomials. 
Our algorithm improves Abramov's universal denominator.

\end{abstract}

\maketitle

\tableofcontents


\section{Introduction}
\lbl{sec.intro}

\subsection{History}
\lbl{sub.history}

The paper introduces a new algorithm to find linear recursions 
(with coefficients polynomials in $n$) for multidimensional sums of the form

\begin{equation}
\lbl{eq.Sn}
S(n)=\sum_{k \in D} f(n,k),
\end{equation}
where $D \subset \BZ^r$ and the summand $f(n,k)$ is a 
{\em proper hypergeometric term} in the variables $(n,k)$. By proper 
hypergeometric term (abbreviated by {\em term}) $f(m)$ in the variables 
$m=(m_1,\dots,m_s)$ we mean an expression of the form  

\begin{equation}
\lbl{eq.fnk}
f(m)=P(m)\prod_{j=1}^J A_j(m)!^{\e_j}
\end{equation}
where $P(m)$ is a polynomial in $m$ and 
$A_j(m)=\sum_{i=1}^s a_{ji} m_j$ is a linear form in $m$ with integer
coefficients $a_{ji}$ and $\e_j =\pm 1$ for $1 \leq j \leq J$. Throughout
this paper, $f(n,k)$ will denote a proper hypergeometric term.

As observed by Zeilberger \cite{Ze}, and further explained in \cite{WZ}, 
Sister Celine's 
method \cite{Fas} can be used to prove the existence of linear recursions of 
$S(n)$ in a constructive way. A faster algorithm was constructed by 
Zeilberger (also known as {\em creative telescoping} \cite{PWZ}), which 
employed Gosper's indefinite summation algorithm \cite{Gos}. Creative
telescoping is faster than Sister Celine's method, and often returns 
the optimal (i.e., minimal order) recursions. However, due to the nature of 
Gosper's algorithm, Zeilberger's method only works for single sums, i.e.,
when $r=1$ in \eqref{eq.Sn}.

Wegschaider in \cite{Weg} improved Sister Celine's algorithm for 
multisums; Zeilberger has a program {\tt EKHAD} for creative telescoping, 
while Paule and Schorn \cite{PSh} implemented it in {\tt Mathematica};
Schneider created a package called {\tt Sigma}, the framework of which 
was explained in \cite{Sch}; Apagodu and Zeilberger \cite{AZ} generalized 
creative telescoping to multi-variable context which resulted in another fast 
algorithm.

\subsection{What is multivariable creative telescoping?}
\lbl{sub.telescoping}

{\em Multivariable creative telescoping} for $S(n)$ is the problem of
finding a natural number $J \in \BN$, and rational functions 
$a_j(n) \in \BQ(n)$, for $1 \leq j \leq J$ and rational functions
$C_i(n,k) \in \BQ(n,k)$ for $1 \leq i \leq r$ so that
\begin{equation}
\lbl{eq.Cert}
\sum_{j=0}^J a_j(n) N^j f(n,k)=
\sum_{i=1}^r (K_i-1) (C_i(n,k) f(n,k)),
\end{equation}  
where $N,K_i,n,k_i$ are operators that act on functions $f(n,k)$ as follows: 
\begin{eqnarray}
(Nf)(n,k)&=&f(n+1,k), \qquad (nf)(n,k)=nf(n,k),	\nonumber \\ 
(K_i f)(n,k_1,\ldots,k_r)&=&
f(n,k_1,\ldots,k_{i-1}, k_i+1, k_{i+1}, \ldots, k_r), \\ 
(k_i f)(n,k_1,\ldots,k_r)&=&k_i f(n,k_1,\ldots,k_r). \nonumber 
\end{eqnarray}
Note that the operators $N,n,K_i,k_i$ commute except in the 
following instance
\begin{equation}
Nn=n+1, \qquad K_ik_i=k_i+1.
\end{equation}
In Equation~\eqref{eq.Cert}, the rational functions $C_i(n,k)$ for 
$1 \leq i \leq r$ are called the {\em certificates} and the operator
$\sum_{j=0}^J a_j(n)N^j$ is called the {\em recursion} for the sum $S(n)$. 
Given Equation \eqref{eq.Cert}, we can sum over $k$ to obtain an inhomogeneous 
linear recursion for $S(n)$, whose inhomogeneous part consists of the 
contribution from the boundary terms. 

All known algorithms of creative telescoping convert 
\eqref{eq.Cert} to a system of {\em linear equations} with coefficients
in the field $\BQ(n,k)$. This is possible since
dividing both sides of \eqref{eq.Cert} by $f(n,k)$ and using the fact that 
$f(n,k)$ is proper hypergeometric, it follows that the ratios
$Nf(n,k)/f(n,k) \in \BQ(n,k)$ and $K_if(n,k)/f(n,k) \in \BQ(n,k)$ are
rational functions. The number of unknowns and equations 
directly affect the performance of the above mentioned algorithms.

\subsection{Abramov's universal denominator algorithm}
\lbl{sub.abramov}

A key part of our search for a linear recursion of hypergeometric multisums
is an improved universal denominator algorithm that finds all rational
solutions $x(n) \in \BQ(n)$ to a linear difference 
equation
$$
\frac{a_m(n)}{b_m(n)}x(n+m)+\cdots+\frac{a_0(n)}{b_0(n)}x(n)=
c(n),$$
where $a_i(n), b_i(n), c(n)$ are polynomials. The idea is to correctly 
predict the denominator $u(x)$ of $x(n)$ (also known as {\em the 
universal denominator}), so that the problem can be reduced into finding a 
polynomial solution to a linear difference equation. In \cite{Ab} Abramov 
developed a universal denominator algorithm.
In this paper, we develop a new algorithm that improves Abramov's algorithm by
possibly reducing the number of factors in the universal denominator.
The new algorithm is used repeatedly to convert the problem of finding 
recursions of multivariate hypergeometric sums into the problem of solving 
system of linear equations. And fewer factors in the universal denominator 
implies fewer numbers of variables and fewer equations in the system.

\subsection{Acknowledgement}
The authors wish to thank D. Zeilberger for a careful reading of an earlier 
version of the paper and for detailed suggestions and comments.

\section{Two algorithms}
\lbl{sec.algorithm}

\subsection{A new algorithm for the recursion of hypergeometric multisums}
\lbl{sub.alg}

To describe our algorithm for the recursion of multisums, let us introduce 
some useful notation.

\begin{definition}
\lbl{def.ef}
Fix a term $f(n,k)$ where $k=(k_1,\dots,k_r)$ and $1 \leq i, j \leq r$.
We say that two operators $P$ and $Q$ in the variables $n,k_i,N$ and $K_i$
are $f$-equivalent modulo $K_i-1,\dots,K_j-1$, and write
\begin{equation}
\lbl{eq.ef}
P \ef Q \bmod (K_i-1,K_{i+1}-1,\dots,K_j-1),
\end{equation}
if there exist rational functions $b_s(n,k)$ for $i \leq s \leq j$
so that
\begin{equation}
\lbl{eq.ef2}
(P-Q)f(n,k)=\sum_{s=i}^j (K_s-1)(b_s(n,k)f(n,k)).
\end{equation}
If $i > j$, the right-hand side of the last equation is 0.
\end{definition}
%
Our algorithm will construct operators $\Rec{i}$ for $0 \leq i \leq r$ and
$\Rec{j,i}$ for $0 \leq i < j \leq r$ of the following form

\begin{align*}
\Rec{i} &= \sum_{s=0}^{d_i} a_{i,s}(n,k_1,\dots,k_i) K_i^s,  \quad i \neq 0; 
&
\Rec{0} &= \sum_{s=0}^{d_0} a_{0,s}(n) N^s;   \\
\Rec{j,i} &= K_i + \sum_{s=0}^{d_{j,i}} a_{j,i,s}(n,k_1,\dots,k_j) K_j^s,
\quad i \neq 0; & 
\Rec{j,0} &= N + \sum_{s=0}^{d_{j,0}} a_{j,0,s}(n,k_1,\dots,k_j) K_j^s,
\end{align*}
that satisfy

\begin{equation}
\lbl{eq.Cri}
\Rec{i}  \ef  0 \bmod (K_{i+1}-1,\dots,K_r-1) \qquad
\Rec{j,i}  \ef  0 \bmod(K_{j+1}-1,\dots,K_r-1).
\end{equation}
We will call such operators $\Rec{i}, \Rec{j,i}$ $f$-{\em compatible}.

Observe that $\Rec{0}$ is exactly Equation \eqref{eq.Cert}.
Here are the steps for the algorithm.

\begin{center}
\fbox{
\begin{tabular}{ll}
INPUT:	& A proper hypergeometric term $f(n, k_1, \ldots, k_r)$.
\\
OUTPUT:		& 
A recursion $\sum_{i=0}^I a_i(n)N^i$ and a set of certificates 
$C_i(n, k_1, \ldots, k_r)$ that satisfy \eqref{eq.Cert}. 
\\ \hline
&
\\
Step 1.		& Set $l := r$, $k_0=n$ and $K_0 := N$;		
\\ 
Step 2.		& Set $\Rec{r,0} := N - \frac{Nf}{f}$, 
$\Rec{r,i} := K_i - \frac{K_if}{f}$, $1 \le i \le r-1$ 
\\ 
& and $\Rec{r}:=
K_r - \frac{K_rf}{f}$;	
\\
Step 3.	
& Construct $\Rec{r-1}$ using Proposition \ref{prop.2var}.
\\
Step 4.		& If $l = 1$, print $\Rec{0}$ and stop; otherwise, continue;
\\
Step 5.		& Construct $\Rec{l-1,i}$ for $0 \leq i \leq l-2$
using Proposition \ref{prop.4var}.
\\		
Step 6.		& Construct $\Rec{l-2}$ using Proposition \ref{prop.2var}.
\\		
Step 7.		& Set $l=l-1$, and go to Step 4.
\end{tabular}
}
\end{center}
There is some similarity between our algorithm and results of Schneider
\cite{Sch}; we do believe however the underlying algorithm to obtain 
the certificates is different from Schneider's program {\tt Sigma},
although he did employ some version of Abramov's algorithm.

The subtle part of the above algorithm are steps 5 and 6 which compute 
the proper denominators for the certificates that appear in 
Equations \eqref{eq.Cri}. This is done using Propositions \ref{prop.4var}
and \ref{prop.2var}, which follow from Theorem~\ref{thm.multigosper}, 
which are implemented in our improved
denominator algorithm of Section \ref{sub.denominators}.

\begin{example}
\lbl{ex.schem}
When $r=3$ the algorithm computes $\Rec{i}$ for $0 \leq i \leq 3$ and
$\Rec{i,j}$ for $0 \leq j <i \leq 3$ in the following order:

$$
\Rec{3,0}, \Rec{3,1},\Rec{3,2},\Rec{3} \to \Rec{2} \to \Rec{2,0}, 
\Rec{2,1} \to \Rec{1} \to \Rec{1,0} \to \Rec{0}
$$
\end{example}
A {\tt Maple} implementation of the above algorithm is available at \cite{GS2}.
A {\tt Mathematica} implementation will be developed later. A $q$-version
of the above algorithm is possible and will also be developed later.

\subsection{An improved universal denominator algorithm}
\lbl{sub.denominators}

In this section we present our universal denominator algorithm. Let $K$
denote a fixed field, which in applications it is the field of rational
functions with rational coefficients in a finite set of variables.

\begin{center}
\fbox{
\begin{tabular}{ll}
INPUT:	& An equation with rational coefficients
$\frac{a_m(s)}{b_m(s)}x(s+m)+\cdots+\frac{a_0(s)}{b_0(s)}x(s)
=c(s)$, 
\\
		& where $a_i(s), b_i(s), c(s) \in K[s]$ are polynomials. 
\\
OUTPUT:	& 
A  rational solution  $x(s)=\frac{R(s)P(s)}{Q(s)}$ to the equation, where 
$P(s), Q(s),R(s) \in K[s]$. 	
\\ \hline
&
\\
Step 1.		& Set $P(s)=1$;
\\ 
Step 2.		& Set $\sigma(s)=\lcm\left(b_0(s),\ldots,b_m(s)\right)$, 
and $\tau_i(s)=a_i(s-i)\frac{\sigma(s-i)}{b_i(s-i)}$, $0 \le i \le m$;
\\ 
Step 3.		& Set $Q(s) = \gcd(\tau_0(s),\ldots,\tau_m(s))$;
\\
Step 4.		& Find the largest possible nonnegative integer $\ell$ 
such that 
$\gcd(\frac{\tau_0(s)}{Q(s)},\frac{\tau_m(s-\ell)}{Q(s)}) = \phi(s) \neq 1$; 
\\
Step 5.		& If such an $\ell$ does not exists, continue to Step 9;
\\		
Step 6.		& Otherwise, set $Q(s)=Q(s) \prod_{i=0}^{\ell}\phi(s+i)$;
\\		
Step 7.		& Set $\tau_0(s) = \frac{\tau_0(s)}{\phi(s)}$ and 
$\tau_m(s) = \frac{\tau_m(s)}{\phi(s+\ell)}$;
\\
Step 8.		& Go to Step 4;
\\
Step 9.		& Set 
$R(s)=\lcm_{0 \le i \le m}\left\{\frac{b_i(s-i)Q(s)}{\gcd\left(b_i(s-i)Q(s),
a_i(s-i)d(s-i)P(s)\prod_{j \neq i}b_j(s-i)Q(s+j-i) \right)}\right\}$;
\\
Step 10.	& If $R(s)=1$, STOP;
\\
Step 11.	& Otherwise, set $P(s)=P(s)R(s)$;
\\
Step 12.	& Go to Step 9.
\\
\end{tabular}
}
\end{center}

\subsection{Plan of the proof}
\lbl{sub.plan}

The structure of the paper is as follows. In Section~\ref{sec.example}, 
we explain the usage of the {\tt Maple} program. We present a few examples and 
compare the results and performance against the programs discussed above.
In Section~\ref{sec.algorithm}, we introduce the terminology used in 
the paper, and present the general structure of the method as a sequence
of steps.
In Section~\ref{sec.proof}, we prove the validity of each step of the 
structure, and also explain the method in detail. 
In Section~\ref{sec.denominator}, we prove a new algorithm 
that generates universal denominators with possibly less factors than those 
generated by Abramov's algorithm, 
that also partially predict the numerators for rational solutions 
to linear difference equations.

\section{Use of the program and examples} 
\lbl{sec.example}

\begin{example}
\lbl{ex.1}
Define 
\begin{eqnarray*}
f(n,k_1,k_2) &=& (-1)^{n+k_1+k_2}\binom{n}{k_1}\binom{n}{k_2}\binom{n+k_1}{k_1}
\binom{n+k_2}{k_2}\binom{2n-k_1-k_2}{n}
\\
g(n,k) &=& \binom{n}{k}^4.
\end{eqnarray*}
We will prove that (\cite[Page 33]{PWZ} and \cite{Sch})
$$
\sum_{k_1,k_2}f(n,k_1,k_2) = \sum_k g(n,k).
$$
After running the program, both sides of the above equation are 
annihilated by the operator
$$
(n+2)^3 N^2-2(2n+3)(3n^2+9n+7) N-4(4n+5)(4n+3)(n+1).
$$
Since they have the same initial conditions for $n=0,1$, 
the two sides agree for all natural numbers $n$.

Please see \cite{GS2} for the syntax of input and output.
\end{example}

\begin{example}
\lbl{ex.2}
Define 
$$
f(n,k_1,k_2) = \binom{n}{k_1}\binom{n}{k_2}\binom{n+k_1}{k_1}
\binom{n+k_2}{k_2}\binom{2n-k_1-k_2}{n}.
$$
Please see \cite{GS2} for complete information. The recursion for the
multisum $\sum_{k_1,k_2} f(n,k_1,k_2)$ is of degree 4. 
\end{example}

\begin{example} 
\lbl{ex.3}
Define
\begin{equation*}
f(n,k_1,k_2,k_3)=
(-1)^{n+k_1+k_2+k_3}\binom{n}{k_1}\binom{n}{k_2}\binom{n}{k_3}
\binom{n+k_1}{k_1}\binom{n+k_2}{k_2}
\binom{n+k_3}{k_3}\binom{2n-k_1-k_2-k_3}{n}.
\end{equation*}
Please see \cite{GS2} for complete information. The recursion for the
multisum $\sum_{k_1,k_2,k_3} f(n,k_1,k_2,k_3)$ is of degree 4.
\end{example}


\section{Proof of the multisum algorithm} 
\lbl{sec.proof}

\subsection{Two Lemmas}
\lbl{sub.lemmas}

We fix a term $f(n,k)$ where $k=(k_1,\dots,k_r)$,
and consider a fixed variable $k_v$ 
and the corresponding operator $K_{v}$. The moduli are always 
$(k_{v+1}, \dots, k_r)$, which we suppress for simplicity.

\begin{lemma} 
\lbl{lem.reduction}
If $\sum^{I}_{i=0}b_i(n,k_1,\ldots,k_v)K_v^i \ef 0$ and 
$N + \sum^{I-1}_{i=0}a_i(n,k_1,\ldots,k_v)K_v^i \ef 0$, 
then for any integer $m$ and 
rational functions $\{\alpha_i(n,k_1,\ldots,k_v)\}_{0 \le i \le m}$, 
there exist
rational functions $\{\beta_j(n,k_1,\ldots,k_v)\}_{0 \le j \le I-1}$ so that
\begin{equation}
\lbl{eq.bjnr}
\sum^{m}_{i=0}\alpha_i(n,k_1,\ldots,k_v)N^i \ef 
\sum^{I-1}_{j=0}\beta_j(n,k_1,\ldots,k_v)K_v^j.
\end{equation}
Furthermore, Equation \eqref{eq.bjnr} is a linear system of equations
with unknowns $\{\beta_j(n,k_1,\ldots,k_v)\}$ and coefficients in the 
field $\BQ(n, k_1,\ldots,k_v)$.
\end{lemma}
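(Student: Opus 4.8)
The plan is to treat $\ef$ as a congruence on the algebra of operators in $n,k_v,N,K_v$ and to reduce everything to $K_v$-degree at most $I-1$ using the two hypotheses. First I would record the stability property of $\ef$ that makes such reductions legal. The suppressed moduli $K_{v+1}-1,\dots,K_r-1$ commute with $N$, with $K_v$, and with every rational function of $n,k_1,\dots,k_v$; moreover, since $f$ is a proper hypergeometric term, $Nf/f$ and $K_vf/f$ lie in $\BQ(n,k)$. Consequently, if $(P-Q)f=\sum_{s=v+1}^r(K_s-1)(b_s f)$ with the $b_s$ rational, then applying $N$, $K_v$, or a rational function $g(n,k_1,\dots,k_v)$ on the left again produces a right-hand side of the same telescoping form (for instance $N(K_s-1)(b_sf)=(K_s-1)\big((Nb_s)\tfrac{Nf}{f}\,f\big)$, and $(Nb_s)\tfrac{Nf}{f}$ is rational). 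Hence $P\ef Q$ implies $NP\ef NQ$, $K_vP\ef K_vQ$, and $gP\ef gQ$, so $\ef$ descends to a congruence whose classes form a vector space over $\BQ(n,k_1,\dots,k_v)$ on which $N$ and $K_v$ act by left multiplication.

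Next I would use the two hypotheses as rewriting rules. The second hypothesis gives $N\ef-\sum_{i=0}^{I-1}a_iK_v^i=:\widetilde A$, an operator of $K_v$-degree $\le I-1$ with coefficients in $\BQ(n,k_1,\dots,k_v)$; the first, assuming its leading coefficient $b_I\neq 0$, gives $K_v^{I}\ef-b_I^{-1}\sum_{i=0}^{I-1}b_iK_v^i$, which together with the stability under left multiplication by $K_v$ lets me rewrite any $K_v^j$ with $j\ge I$ as a $\BQ(n,k_1,\dots,k_v)$-combination of $K_v^0,\dots,K_v^{I-1}$; this reduction terminates since each rewrite lowers the $K_v$-degree. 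I would then prove by induction on $i$ that $N^i\ef\sum_{j=0}^{I-1}\gamma_{i,j}K_v^j$ with $\gamma_{i,j}\in\BQ(n,k_1,\dots,k_v)$: the base case is trivial, and in the step I apply $N$ to $N^i\ef\sum_j\gamma_{i,j}K_v^j$ (legal by the stability property), commute $N$ past $K_v$ and past the coefficients (shifting them to $N\gamma_{i,j}$), substitute $Nf\ef\widetilde Af$, obtaining a $K_v$-polynomial of degree $\le 2(I-1)$, and finally reduce modulo the first hypothesis back to degree $\le I-1$.

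With these reductions in hand, multiplying each $N^i\ef\sum_j\gamma_{i,j}K_v^j$ by $\alpha_i$ and summing gives \eqref{eq.bjnr} with $\beta_j=\sum_{i=0}^m\alpha_i\gamma_{i,j}\in\BQ(n,k_1,\dots,k_v)$, which proves existence. For the ``furthermore'' claim I would divide the defining relation \eqref{eq.ef2} by $f$: because $f$ is proper hypergeometric, each $N^if/f$, $K_v^jf/f$ and $(K_s-1)(b_sf)/f$ is a rational function, so the relation becomes an identity in $\BQ(n,k)$ in which the unknowns $\beta_j$ and the certificate functions $b_s$ all occur linearly; comparison then yields a linear system over $\BQ(n,k_1,\dots,k_v)$, of which the $\beta_j$ just constructed are a solution.

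The main obstacle is the bookkeeping behind the stability property and the induction: one must verify that applying $N$ and $K_v$ to a telescoping right-hand side stays telescoping with rational certificates (this is exactly where the commutation of the suppressed moduli with $N,K_v$ and the proper-hypergeometric hypothesis on $f$ are used), and that pushing $N$ past the coefficients of the reduced operators is tracked correctly through the shifts $\gamma_{i,j}\mapsto N\gamma_{i,j}$. The only assumption needed beyond the statement is that the order-$I$ relation of the first hypothesis has nonzero leading coefficient $b_I$, so that $K_v^I$ can be eliminated; otherwise the identical argument reduces modulo the true order, which is still $\le I-1$.
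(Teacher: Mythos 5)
Your proposal is correct and follows essentially the same route as the paper: induction on powers of $N$, using the second hypothesis to replace $N$ by a $K_v$-polynomial of degree at most $I-1$ and the first hypothesis to reduce the resulting $K_v$-degree back below $I$, with linearity over $\BQ(n,k_1,\ldots,k_v)$ handling general $\alpha_i$. The extra scaffolding you supply (the verification that $\ef$ is stable under left multiplication by $N$, $K_v$, and rational coefficients, and the remark about the leading coefficient $b_I$) is left implicit in the paper but is exactly the right justification for the steps it takes.
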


\begin{proof}
Since the operators are linear over the field $\BQ(n, k_1,\ldots,k_v)$, 
we only need to 
show the result for $N^m$ for $m \ge 1$ by induction.

The conclusion is true for $m=1$. Suppose it is true for $m-1$, i.e.,
$N^{m-1} \ef \sum^{I-1}_{j=0}\gamma_j(n,k_1,\ldots,k_v)K_v^j$ for some 
$\{\gamma_i\}$. Then we have
\begin{eqnarray*}
N^m & \ef & N(N^{m-1}) \ef  N(\sum^{I-1}_{j=0}\gamma_j(n,k_1,\ldots,k_v)K_v^j) 
\ef \sum^{I-1}_{j=0}\gamma_j(n+1,k_1,\ldots,k_v)K_v^jN  \\
& \ef & \sum^{I-1}_{j=0}\gamma_j(n+1,k_1,\ldots,k_v)K_v^j
\left(-\sum^{I-1}_{i=0}a_i(n,k_1,\ldots,k_v)K_v^i\right) \ef 
\sum^{I-1}_{i=0}\beta_i(n,k_1,\ldots,k_v)K_v^i,
\end{eqnarray*}
for some rational $\beta_i(n,k_1,\ldots,k_v)$. The last equation is because 
the order of recursion in $k_v$ satisfied by $f$ is at most $I$.

Since the reduction of $N^m$ does not depend on 
$\{\alpha_i(n,k_1,\ldots,k_v)\}$, it 
follows that $\{\beta_j(n,k_1,\ldots,k_v)\}$ are linear functions of 
$\{\alpha_i(n,k_1,\ldots,k_v)\}$.
\end{proof}

\begin{lemma}
\lbl{lem.sum}
Given $K_v^p + \sum^{p-1}_{i=0}a_i(n, k_1,\ldots,k_v)K_v^i \ef 0$ 
and 
$$
-\sum_{j=0}^{p-1}a_{p-1-j}(n, k_1,\ldots,k_v+j)
b_{p-1}(n, k_1,\ldots,k_v+1+j)-b_{p-1}(n, k_1,\ldots,k_v) 
= \sum_{j=0}^{p-1}c_{p-1-j}(n, k_1,\ldots,k_v+j),
$$
where $\{c_i(n, k_1,\ldots,k_v)\}_{0 \le i \le p-1}$ and 
$b_{p-1}(n, k_1,\ldots,k_v)$ are rational functions.
Define, for $0 \le i < p-1$,
\begin{eqnarray*}
b_i(n, k_1,\ldots,k_v) 
&=& b_{p-1}(n, k_1,\ldots,k_v-p+1+i) 
    + \sum_{j=1}^{p-i-1}a_{i+j}(n, k_1,\ldots,k_v-j)
b_{p-1}(n, k_1,\ldots,k_v-j+1) \\
& & + \sum_{j=1}^{p-i-1}c_{i+j}(n, k_1,\ldots,k_v-j).
\end{eqnarray*}
Then
$$
\sum^{p-1}_{i=0}c_i(n, k_1,\ldots,k_v)k_v^i \ef 
(k_v-1)\sum^{p-1}_{i=0}b_i(n, k_1,\ldots,k_v)k_v^i.
$$
\end{lemma}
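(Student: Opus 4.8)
The plan is to read the displayed conclusion as an identity of operators in the shift $K_v$ (so the lowercase $k_v^i$ and $k_v-1$ there are to be read as $K_v^i$ and $K_v-1$), with the moduli $K_{v+1}-1,\dots,K_r-1$ suppressed as in the statement, and to prove it by exhibiting $B:=\sum_{i=0}^{p-1}b_i K_v^i$ as an explicit antidifference: I compute $(K_v-1)B$, lower its top term with the given order-$p$ recursion, and match the resulting coefficients of $K_v^i$ against the $c_i$. Throughout I suppress the variables $n,k_1,\dots,k_{v-1}$ and display only the $k_v$-dependence.

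First I record the one structural fact needed: multiplication by any rational function $h(n,k_1,\dots,k_v)$ preserves $\ef$ modulo $(K_{v+1}-1,\dots,K_r-1)$, since each $K_s$ with $s>v$ commutes with $h$ and hence $h\,(K_s-1)(\beta f)=(K_s-1)(h\beta f)$. In particular I may rewrite the hypothesis as $K_v^{p}\ef-\sum_{i=0}^{p-1}a_i(k_v)K_v^{i}$ and multiply it by $b_{p-1}(k_v+1)$ without leaving the class. Using $K_v\,b_i(k_v)=b_i(k_v+1)K_v$ I then get
$$
(K_v-1)B=b_{p-1}(k_v+1)K_v^{p}+\sum_{i=1}^{p-1}\bigl(b_{i-1}(k_v+1)-b_i(k_v)\bigr)K_v^{i}-b_0(k_v),
$$
and replacing $b_{p-1}(k_v+1)K_v^{p}$ by $-b_{p-1}(k_v+1)\sum_{i=0}^{p-1}a_i(k_v)K_v^{i}$ collapses this, modulo the higher moduli, to an operator of $K_v$-degree $<p$ with coefficient of $K_v^{i}$ equal to $-b_{p-1}(k_v+1)a_i(k_v)+b_{i-1}(k_v+1)-b_i(k_v)$ for $1\le i\le p-1$ and $-b_{p-1}(k_v+1)a_0(k_v)-b_0(k_v)$ for $i=0$. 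Since the shift operators $K_v^{i}$ are linearly independent, it now suffices to check that each such coefficient equals $c_i(k_v)$.

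For $1\le i\le p-1$ the required equality is the downward first-order recursion $b_{i-1}(k_v+1)=c_i(k_v)+b_{p-1}(k_v+1)a_i(k_v)+b_i(k_v)$, and the formula defining $b_i$ is exactly its telescoped solution out of the free datum $b_{p-1}$; I verify this by substituting the formula for $b_i$ and $b_{i-1}$ and reindexing the sums $\sum_j a_{i+j}(k_v-j)b_{p-1}(\cdots)$ and $\sum_j c_{i+j}(k_v-j)$ by $j\mapsto j+1$, so that the freed $j=1$ slots are filled precisely by $a_i(k_v-1)b_{p-1}(k_v)$ and $c_i(k_v-1)$. For $i=0$ the condition $c_0(k_v)=-b_{p-1}(k_v+1)a_0(k_v)-b_0(k_v)$ is the single consistency constraint of the recursion; substituting the formula for $b_0$ and shifting $k_v\mapsto k_v+p-1$ turns it, once the isolated term $-a_0(\cdot)b_{p-1}(\cdot)$ is absorbed as the $j=0$ summand, into exactly the hypothesis $-\sum_{j=0}^{p-1}a_{p-1-j}(k_v+j)b_{p-1}(k_v+1+j)-b_{p-1}(k_v)=\sum_{j=0}^{p-1}c_{p-1-j}(k_v+j)$.

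The step I expect to be the genuine obstacle is this last identification for $i=0$: everything else is a clean coefficient comparison, but matching the telescoped formula for $b_0$ with the stated consistency hypothesis requires getting the overall shift $k_v\mapsto k_v+p-1$, the reindexing $i\mapsto p-1-j$, and the boundary $j=0$ term all to line up simultaneously, and a single off-by-one or sign slip there is the most likely way the argument breaks.
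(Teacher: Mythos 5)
Your proposal is correct and follows essentially the same route as the paper: compute $(K_v-1)\sum_i b_iK_v^i$, eliminate the top term $b_{p-1}(k_v+1)K_v^p$ via the given order-$p$ recursion, and match coefficients using the downward identity $b_{i-1}(k_v+1)=b_i(k_v)+a_i(k_v)b_{p-1}(k_v+1)+c_i(k_v)$ together with the boundary identity $b_0(k_v)=-b_{p-1}(k_v+1)a_0(k_v)-c_0(k_v)$, which the paper simply asserts are ``easy to check'' and which you verify explicitly by the reindexings you describe (all of which do line up, including the $i=0$ shift by $p-1$).
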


\begin{proof}
From the definition of $b_i(n, k_1,\ldots,k_v)$, it is easy to check that

\begin{eqnarray*}
b_{i-1}(n, k_1,\ldots,k_v+1) &=& b_i(n, k_1,\ldots,k_v) 
+ a_i(n, k_1,\ldots,k_v)b_{p-1}(n, k_1,\ldots,k_v+1)+c_i(n, k_1,\ldots,k_v),
\\
b_0(n, k_1,\ldots,k_v) &=& -b_{p-1}(n, k_1,\ldots,k_v+1)
a_0(n, k_1,\ldots,k_v)-c_0(n, k_1,\ldots,k_v).
\end{eqnarray*}
It follows that

\begin{eqnarray*}
(K_v-1)\sum^{p-1}_{i=0}b_i(n, k_1,\ldots,k_v)K_v^i	
& \ef & b_{p-1}(n, k_1,\ldots,k_v+1)K_v^p 
+ \sum^{p-2}_{i=0}b_i(n, k_1,\ldots,k_v+1)K_v^{i+1} 
\\
&     & - \sum^{p-1}_{i=0}b_i(n, k_1,\ldots,k_v)K_v^i	
\\	
& \ef & -\sum_{i=0}^{p-1}b_{p-1}(n, k_1,\ldots,k_v+1)
a_i(n, k_1,\ldots,k_v)K_v^i 
\\
&     & + \sum_{i=1}^{p-1}\left(b_{i-1}(n, k_1,\ldots,k_v+1)
-b_i(n, k_1,\ldots,k_v)\right)K_v^i - b_0(n, k_1,\ldots,k_v)	
\\	
& \ef & -\sum_{i=0}^{p-1}b_{p-1}(n, k_1,\ldots,k_v+1)
a_i(n, k_1,\ldots,k_v)K_v^i 
\\
&     & + \sum_{i=1}^{p-1}\left(a_i(n, k_1,\ldots,k_v)
b_{p-1}(n, k_1,\ldots,k_v+1)+c_i(n, k_1,\ldots,k_v)\right)K_v^i
\\
&        & + b_{p-1}(n, k_1,\ldots,k_v+1)a_0(n, k_1,\ldots,k_v)
+c_0(n, k_1,\ldots,k_v)
\\
& \ef & \sum^{p-1}_{i=0}c_i(n, k_1,\ldots,k_v)K_v^i.
\end{eqnarray*}
\end{proof}

Lemma~\ref{lem.sum} also appeared in \cite{Sch} in a different form. It is 
included here for completeness of the proofs.

\subsection{Two propositions for the algorithm}
\lbl{sub.keyprop}
In this section we state and prove Propositions \ref{prop.4var} and 
\ref{prop.2var} which are used in our algorithm.
Fix a term $f(n,k)$ where $k=(k_1,\dots,k_r)$. Recall we set $k_0=n$ and 
$K_0=N$.

\begin{proposition} 
\lbl{prop.4var}
Let $1 \le v < r$. Given $f$-compatible operators 
$\Rec{v+1}, \Rec{v}, \Rec{v+1,u}, \Rec{v+1,v}$
for $0 \leq u \le v \leq r$, it is possible to construct
an $f$-compatible operator $\Rec{v,u}$ for $0 \leq u < v$ in Step 5.
\end{proposition}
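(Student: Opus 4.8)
The plan is to read the required identity \eqref{eq.Cri} for $\Rec{v,u}$ as a statement about the partially summed term $F_v \eqdef \sum_{k_{v+1},\dots,k_r} f$: by Definition~\ref{def.ef}, asking $\Rec{v,u} \ef 0 \bmod (K_{v+1}-1,\dots,K_r-1)$ is the same as asking that $K_u + \sum_s a_{v,u,s}K_v^s$ annihilate $F_v$ (with $K_u$ read as $N$ when $u=0$, since $k_0=n$, $K_0=N$). The hypotheses package the analogous data one level up: $\Rec{v+1}$, $\Rec{v+1,u}$ and $\Rec{v+1,v}$ encode, modulo $(K_{v+2}-1,\dots,K_r-1)$, a recursion in $K_{v+1}$ for $F_{v+1}=\sum_{k_{v+2},\dots,k_r}f$ together with rewritings of $K_u$ and of $K_v$ as polynomials in $K_{v+1}$. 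Since $F_v=\sum_{k_{v+1}}F_{v+1}$, the whole task is to transport the known action of $K_u$ on $F_{v+1}$, presently expressed through $K_{v+1}$, into an action on $F_v$ expressed through $K_v$, by summing over $k_{v+1}$.

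First I would move everything into the $K_{v+1}$ picture modulo $(K_{v+2}-1,\dots,K_r-1)$. Using the recursion $\Rec{v+1}$ I may reduce every operator to $K_{v+1}$-degree below $d_{v+1}$. Applying Lemma~\ref{lem.reduction} with the pair $(N,K_v)$ there played by $(K_v,K_{v+1})$ here --- its two hypotheses being exactly the recursion $\Rec{v+1}$ and the relation $\Rec{v+1,v}$, which expresses $K_v$ through $K_{v+1}$ --- each power $K_v^j$ is rewritten as a polynomial $\Phi_j(K_{v+1})$ of degree $<d_{v+1}$ with coefficients in $\BQ(n,k_1,\dots,k_{v+1})$. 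The relation $\Rec{v+1,u}$ gives in the same way $K_u \ef -\sum_s c_s(n,k_1,\dots,k_{v+1})K_{v+1}^s$. Expressing this $K_{v+1}$-polynomial in the family $\{\Phi_j\}_{0\le j<d_{v+1}}$ --- a square linear system over $\BQ(n,k_1,\dots,k_{v+1})$ --- yields coefficients $e_j$ with $K_u \ef \sum_{j} e_j(n,k_1,\dots,k_{v+1})K_v^j \bmod(K_{v+2}-1,\dots,K_r-1)$.

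What remains is to pass from $\bmod\,(K_{v+2}-1,\dots)$ to $\bmod\,(K_{v+1}-1,\dots)$ and, simultaneously, to purge the dependence of the $e_j$ on $k_{v+1}$: I must produce $\bar e_j(n,k_1,\dots,k_v)$, free of $k_{v+1}$, so that $\sum_j (e_j-\bar e_j)K_v^j \ef 0 \bmod(K_{v+1}-1,\dots,K_r-1)$, and then set $\Rec{v,u}\eqdef K_u - \sum_j \bar e_j K_v^j$ (with $K_u$ replaced by $N$ when $u=0$). Concretely this is the summation over $k_{v+1}$: each coefficient $e_j(k_{v+1})$ must be creatively telescoped against the $K_{v+1}$-recursion $\Rec{v+1}$, the inhomogeneous remainder being fed back through $\Rec{v}$ so that the surviving $K_v$-degree stays below $d_v$ and the output has the prescribed shape. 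This is an instance of the creative telescoping supplied by Theorem~\ref{thm.multigosper}, whose telescoping certificate is assembled exactly as in Lemma~\ref{lem.sum}.

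I expect this last step to be the main obstacle. The preparatory reductions --- bounding the $K_{v+1}$-degree with $\Rec{v+1}$, the base change $K_v^j \mapsto \Phi_j(K_{v+1})$ from Lemma~\ref{lem.reduction}, and the concluding verification of \eqref{eq.Cri} straight from Definition~\ref{def.ef} --- are routine. The solvability of the intermediate linear system (equivalently, that the $\{\Phi_j\}$ span far enough) I would secure by not fixing $d_{v,u}$ in advance but letting it grow until the system closes, just as the order of $\Rec{v-1}$ is allowed to grow in Proposition~\ref{prop.2var}. The genuine difficulty is that the summation over $k_{v+1}$ returns certificates whose rational coefficients must have a sufficiently controlled denominator for that linear system to be finite and actually solvable over $\BQ(n,k_1,\dots,k_v)$; guaranteeing this is precisely the role of the improved universal-denominator algorithm of Section~\ref{sub.denominators}, on which Theorem~\ref{thm.multigosper} rests.
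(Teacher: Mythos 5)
Your outline correctly identifies most of the paper's ingredients --- Lemma~\ref{lem.reduction} to push everything into $K_{v+1}$-polynomials, Lemma~\ref{lem.sum} to reduce the telescoping to a single equation, and Theorem~\ref{thm.multigosper} to control the denominator of the resulting rational unknown --- but it is missing the half of the argument that makes the construction actually succeed. The paper's proof has two separate parts: first an \emph{existence} proof, then an \emph{effective search}. The existence proof is a Sister Celine/Wilf--Zeilberger counting argument applied to Equation~\eqref{eqn.exist.celine}: for large enough orders $U_v,\dots,U_m$ the number of unknown coefficients $\prod_{l}(U_l+1)$ exceeds the number of linear equations, which grows only like $\left(\prod_l U_l\right)\left(\sum_l U_l^{-1}\right)$, so a nontrivial solution exists; setting $K_{v+1}=\cdots=K_r=1$ then produces an operator of the shape \eqref{eqn.exist}. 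Only because of this is the final linear system in $\{\phi_j\}$ and $\{\psi_i\}$ guaranteed to have a nontrivial solution. Your substitute --- ``let $d_{v,u}$ grow until the system closes'' --- presupposes exactly the statement to be proved; Theorem~\ref{thm.multigosper} cannot supply it, since it only bounds the denominator of $b_{J-1}$ (making the search space finite-dimensional) and says nothing about solvability.

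There is also a structural problem in your middle step. You propose to first express the $K_{v+1}$-polynomial representing $K_u$ in the ``basis'' $\{\Phi_j\}$ obtained from the powers $K_v^j$, via a square linear system over $\BQ(n,k_1,\dots,k_{v+1})$. Nothing guarantees that system is solvable (the $\Phi_j$ need not be independent or spanning), and even when it is, it works modulo $(K_{v+2}-1,\dots,K_r-1)$ only, producing coefficients $e_j$ that depend on $k_{v+1}$ and must then be ``purged.'' The paper avoids this detour entirely: it keeps the $k_{v+1}$-free coefficients $\phi_i$ as unknowns from the outset, reduces $K_u+\sum_i\phi_iK_v^i$ to a $K_{v+1}$-polynomial whose coefficients $c_i$ are \emph{linear} in the $\phi_i$, and demands only that this polynomial be of the form $(K_{v+1}-1)\left(\sum_i b_iK_{v+1}^i\right)$ --- a strictly weaker requirement than vanishing, which by Lemma~\ref{lem.sum} collapses to the single equation \eqref{eqn.main.1} for one rational function $b_{J-1}$. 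That is the condition the universal denominator theorem is designed to handle, and it is the one the existence argument certifies to be satisfiable. As written, your route both overconstrains the intermediate step and leaves the final solvability unproved.
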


\begin{proposition}
\lbl{prop.2var}
Let $1 \le v \le r$. Given $f$-compatible operators $\Rec{v},\Rec{v,v-1}$ for 
$0 \le u \le v$, it is possible to construct $f$-compatible operator
$\Rec{v-1}$ in Steps 3 and 6. 
\end{proposition}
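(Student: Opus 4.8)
The goal is to produce an operator
$$
\Rec{v-1}=\sum_{m=0}^{d_{v-1}}a_{v-1,m}(n,k_1,\dots,k_{v-1})K_{v-1}^m
$$
whose coefficients are free of $k_v,\dots,k_r$ and which satisfies $\Rec{v-1}\ef 0\bmod(K_v-1,\dots,K_r-1)$. Note that this modulus has one more generator ($K_v-1$) than the moduli attached to the given operators $\Rec{v}$ and $\Rec{v,v-1}$, so the content of the proposition is precisely the elimination of the variable $k_v$. The plan is the method of undetermined coefficients: introduce the $a_{v-1,m}$ as unknowns, rewrite $\Rec{v-1}f$ as a bounded-degree polynomial in $K_v$ applied to $f$, and then impose that this polynomial be $K_v$-summable, i.e.\ $f$-equivalent to $0$ modulo $K_v-1$. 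The first reduction is furnished by Lemma~\ref{lem.reduction} and the summability step by Lemma~\ref{lem.sum}, after which everything collapses to a linear difference equation solved by the universal denominator algorithm of Section~\ref{sub.denominators}.

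\textbf{Reduction to a polynomial in $K_v$.} First I would normalize. Since $\Rec{v}\ef 0$ has $K_v$-order $d_v$ with invertible leading coefficient, I may reduce the given $\Rec{v,v-1}=K_{v-1}+\sum_s a_{v,v-1,s}K_v^s$ modulo $\Rec{v}$ to bring its $K_v$-order down to at most $d_v-1$, obtaining $K_{v-1}\ef-\sum_{s=0}^{d_v-1}\tilde a_s(n,k_1,\dots,k_v)K_v^s$. This pair of relations is exactly the hypothesis of Lemma~\ref{lem.reduction}, with the operator $N$ there played by $K_{v-1}$ and the base variable $n$ by $k_{v-1}$; the only properties of $N$ used in that proof are that it shifts its own variable and commutes with $K_v$, both of which $K_{v-1}$ enjoys, together with the fact that $f$ satisfies a $K_v$-recursion of order $\le d_v$. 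Hence Lemma~\ref{lem.reduction} reduces every power $K_{v-1}^m$, and therefore $\Rec{v-1}$ itself, to
$$
\Rec{v-1}\ef\sum_{j=0}^{d_v-1}\Gamma_j(n,k_1,\dots,k_v)K_v^j\bmod(K_{v+1}-1,\dots,K_r-1),
$$
where each $\Gamma_j$ is $\BQ(n,k_1,\dots,k_v)$-linear in the unknowns $a_{v-1,m}$.

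\textbf{Imposing summability.} For $\Rec{v-1}$ to be $f$-compatible I now need the right-hand side to be $f$-equivalent to $0$ modulo the extra generator $K_v-1$, that is, $\sum_j\Gamma_jK_v^j\ef(K_v-1)\sum_j b_jK_v^j$ for some rational $b_j$. This is precisely the conclusion of Lemma~\ref{lem.sum}: taking $c_i=\Gamma_i$ and $p=d_v$, with the normalized (monic-in-$K_v$) $\Rec{v}$ as its first hypothesis, such a telescoper exists once the single summability equation
$$
-\sum_{j=0}^{p-1}a_{p-1-j}(\dots,k_v+j)\,b_{p-1}(\dots,k_v+1+j)-b_{p-1}(\dots,k_v)=\sum_{j=0}^{p-1}\Gamma_{p-1-j}(\dots,k_v+j)
$$
is solvable for a rational certificate $b_{p-1}$, the remaining $b_i$ then being given explicitly by the formula of that lemma. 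Substituting the linear expressions for the $\Gamma_j$ turns this into one linear difference equation in the shift $K_v$, whose unknowns are the $k_v$-free functions $a_{v-1,m}$ and the rational function $b_{p-1}(n,k_1,\dots,k_v)$; the $b_i$ assemble into the certificate for the $K_v$-direction.

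\textbf{Solving and the main obstacle.} Finally I would clear denominators and solve this equation with the improved universal denominator algorithm (Theorem~\ref{thm.multigosper}): predict a denominator $u$ for $b_{p-1}$, write $b_{p-1}=B/u$ with $B$ polynomial, and match coefficients in $k_v$ to get a finite linear system over $\BQ(n,k_1,\dots,k_{v-1})$ in the coefficients of $B$ together with the $a_{v-1,m}$; any nontrivial solution yields $\Rec{v-1}$. The main obstacle is not the algebra but guaranteeing a nonzero solution while forcing the $a_{v-1,m}$ to be independent of $k_v$: the source terms $\Gamma_{p-1-j}$ genuinely depend on $k_v$, so solvability is a real constraint, and one must argue that for $d_{v-1}$ and $\deg B$ large enough the system becomes underdetermined and hence has a nonzero solution. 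Existence of such a pure recursion modulo $(K_v-1,\dots,K_r-1)$ follows from the fundamental existence theorem for proper hypergeometric terms \cite{WZ}; the new denominator algorithm is what makes the search effective and keeps the number of unknowns and equations small.
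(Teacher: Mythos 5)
Your proposal is correct and takes essentially the same approach as the paper: the paper's own proof of this proposition simply defers to the method of Proposition~\ref{prop.4var} (``details are omitted''), and your argument --- reduce powers of $K_{v-1}$ via Lemma~\ref{lem.reduction}, impose $K_v$-summability via Lemma~\ref{lem.sum}, predict the certificate's denominator with Theorem~\ref{thm.multigosper}, and invoke the Wilf--Zeilberger counting argument for a nontrivial solution --- is precisely that method spelled out for the construction of $\Rec{v-1}$.
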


\subsection{Proof of Proposition \ref{prop.4var}}
\lbl{sub.4var}

Let
\begin{equation}
\lbl{eq.recv1}
\Rec{v+1} := 
K_{v+1}^J+\sum_{i=0}^{J-1}a_i(k_0,k_1, \ldots, 
k_{v+1})K_{v+1}^i \qquad \text{and}
\qquad \Rec{v+1} \ef 0 \bmod (K_{v+2}-1,\ldots,K_r-1).
\end{equation}
We can always divide the operator by the leading coefficient if it is not 1,
since it does not involve variables $k_{v+2},\ldots,k_{r}$.
Let us look for 
\begin{eqnarray} 
\lbl{eqn.exist}
\Rec{v,u} := K_u + \sum_{i=0}^{I-1}
\phi_i(k_0,k_1, \ldots, k_{v})K_{v}^i \qquad \text{and} \qquad
\Rec{v,u} \ef 0 \bmod(K_{v+1}-1,\ldots,K_r-1),
\end{eqnarray}
for some rational functions $\phi_i(k_0,k_1, \ldots, k_{v})$.
To prove the existence of $\Rec{v,u}$, borrow the idea in the proof 
of \cite{WZ} by solving
\begin{eqnarray} 
\lbl{eqn.exist.celine}
\left[K_u+\sum_{i_v=0}^{U_v}\cdots\sum_{i_m=0}^{U_m}
	\left(\sigma_{i_v \cdots i_m}(k_0,k_1, \ldots, k_{v})
\prod_{l=v}^{m}K_l^{i_l}\right)\right]f(k_0,k_1, \ldots, k_{r})=0,
\end{eqnarray}
with $\sigma_{i_v \cdots i_m}(k_0,k_1, \ldots, k_{v})$ being the unknown 
rational functions. Divide both sides by the hypergeometric function 
$f(k_0,\dots,k_{r})$ to obtain an equation of rational functions. By 
comparing the coefficients of the 
powers of $k_{v+1},\ldots,k_{r}$, we can set up 
a system of linear equations over the field $\BQ(k_0,k_1, \ldots, k_{v})$, 
whose unknowns are $\sigma_{i_v \cdots i_m}(k_0,k_1, \ldots, k_{v})$.
The number of unknowns is
$\prod_{l=v}^{m}(U_l+1)$, while the number of equations, 
which equals the degree of the numerator in 
Equation~\eqref{eqn.exist.celine}, is proportional to
$\left(\prod_{l=v}^{m}U_l\right)\left(\sum_{l=v}^{m}\frac{1}{U_l}\right)$.
It follows that when $U_v, \ldots, U_m$ are large enough, we have more 
unknowns than equations in the system, which guarantees a nontrivial solution. 
Replacing $K_{v+1}, \ldots, K_{r}$ in Equation~\eqref{eqn.exist.celine} 
with 1, we get a solution to Equation~\eqref{eqn.exist}. 
The maximum power of $I-1$ on $K_v$ is ensured by the existence of a 
recursion of order $I$.
The readers may also compare with 
\cite[Theorem 4.4.1]{PWZ} or \cite[Theorem MZ]{AZ} for a detailed 
discussion on the method in similar cases. With the proof of existence 
completed, we can introduce a new method to find the functions 
$\{\phi_i\}$ and $\{b_j\}$.

Reduce $K_u + \sum_{i=0}^{I-1} \phi_i(k_0,k_1, \ldots, k_{v})K_v^i$ into 
$\sum_{i=0}^{J-1}c_i(k_0,k_1, \ldots, k_{v+1})K_{v+1}^i$ 
for some rational \newline
$c_i(k_0,k_1, \ldots, k_{v+1})$, using  
Lemma~\ref{lem.reduction} below. This implies that 
$$
\modo{\sum_{i=0}^{J-1}c_i(k_0,k_1, \ldots, 
k_{v+1})K_{v+1}^i}{(K_{v+1} - 1)
\left(\sum_{i=0}^{J-1}{b_i(k_0,k_1, \ldots, k_{v+1})K_{v+1}^i}\right)}.
$$ 
Since the coefficient of $K_{v+1}^J$ is 1 in \eqref{eq.recv1}, it follows from 
Lemma~\ref{lem.sum} below 
that we only need to find $b_{J-1}(k_0,k_1, \ldots, k_{v+1})$ such that
\begin{eqnarray} 
\lbl{eqn.main.1}
-\sum_{j=-1}^{J-1}a_{J-1-j}(k_0,k_1, \ldots, 
k_{v+1}+j)b_{J-1}(k_0,k_1, \ldots, k_{v+1}+1+j) \nonumber 
\\	= \sum_{j=0}^{J-1}c_{J-1-j}(k_0,k_1, \ldots, k_{v+1}+j).
\end{eqnarray}
In the equation, $\{a_i(k_0,k_1, \ldots, k_{v+1})\}_{0 \le i \le J-1}$ 
are known; 
$b_{J-1}$ is a rational function over $k_0,\ldots,k_{v+1}$ over the field 
$\BQ(k_0,k_1, \ldots, k_{v})$; and $\{c_i\}_{0 \le i \le J-1}$ are linear 
over $\{\phi_j\}_{0 \le j \le I-1}$. So the right-hand side can be 
written as
$\frac{\sum_{j=0}^{I-1}U_{j}(k_0,k_1, \ldots, 
k_{v+1})\phi_j(k_0,k_1, \ldots, k_{v})}{V(k_0,k_1, \ldots, k_{v+1})}$, 
with polynomials $\phi_j(k_0,k_1, \ldots, k_{v})$ unknown;  
and $U_{j}(k_0,k_1, \ldots, k_{v+1})$ and $V(k_0,k_1, \ldots, k_{v+1})$ known.
Multiply both sides of 
Equation~\eqref{eqn.main.1} by $V(k_0,k_1, \ldots, k_{v+1})$ to obtain
\begin{eqnarray} 
\lbl{eqn.maineq}
\sum_{j=-1}^{J-1}-a_{J-1-j}(k_0,k_1, \ldots, 
k_{v+1}+j)b_{J-1}(k_0,k_1, \ldots, k_{v+1}+1+j)V(k_0,k_1, \ldots, k_{v+1}) 
\nonumber 
\\
= \sum_{j=0}^{I-1}U_{j}(k_0,k_1, \ldots, k_{v+1})
\phi_j(k_0,k_1, \ldots, k_{v}). 
\end{eqnarray}
In the above equation, consider $b_{J-1}(k_0,\dots,k_{v+1}) \in 
\BQ(k_0,\dots,k_v)(k_{v+1})$, and apply Theorem~\ref{thm.multigosper} 
to the field $K=\BQ(k_0,\dots,k_v)$ and the variable
$s=k_{v+1}$. It follows that we can write
$$
b_{J-1}(k_0,k_1, \ldots, k_{v+1}) = 
\frac{R(k_0,k_1, \ldots, k_{v+1})P(k_0,k_1, 
\ldots, k_{v+1})}{Q(k_0,k_1, \ldots, k_{v+1})},
$$
with polynomials $R(k_0,k_1, \ldots, k_{v+1}) \in \BQ[k_0,\dots,k_{v+1}]$ 
and 
$Q(k_0,k_1, \ldots, k_{v+1}) \in \BQ[k_0,\dots,k_{v+1}]$ known, and 
$P(k_0,k_1, \ldots, k_{v+1}) \in \BQ[k_0,\dots,k_{v+1}]$ unknown. 
By multiplying both sides by the 
common denominator of the left-hand side, and comparing the degree of 
$k_{v+1}$, 
we can determine the degree of $k_{v+1}$ in $P(k_0,k_1, \ldots, k_{v+1})$, 
say, $L$. By writing  $P(k_0,k_1, \ldots, k_{v+1})$ as 
$\sum_{i=0}^{L}\psi_i(k_0,k_1, \ldots, k_{v})k_{v+1}^i$, plugging it back 
into Equation~\eqref{eqn.maineq}, and comparing the coefficients of powers of 
$k_{v+1}$, we can set up a system of linear equations with 
$\{\phi_j\}_{0 \le j \le I-1}$  and $\{\psi_i\}_{0 \le i \le L}$ as unknowns.
The system is guaranteed to have a nontrivial solution because of the 
existence of the recursion.
\qed

\subsection{Proof of Proposition \ref{prop.2var}}
\lbl{sub.2var}

The existence of the recursion can be proved in a way similar to 
Theorem~\ref{prop.4var}. And the method of the new algorithm is 
also the same. Basically we again rewrite the left-hand side of the equations 
into powers of $K_v$, compare their coefficients on both sides, and solve the 
resulting linear equations. Details are omitted.
\qed

\section{Proof of the universal denominator algorithm}
\lbl{sec.denominator}

In this section we state and prove Theorem~\ref{thm.multigosper}
which determines the denominator and 
partially the numerator of the rational function $b_{J-1}$ 
in Equation~\eqref{eqn.maineq}. This is crucial for the performance of the 
algorithm as a whole, 
because it reduces the number of variables and number of equations in the 
final system 
of linear equations to be solved. The most straight-forward guess for the 
denominator $b_{J-1}$ in Equation~\eqref{eqn.maineq}, i.e., the denominator 
of the right-hand side of the equation, 
will give us an algorithm whose performance is compatible to that of Sister 
Celine's method 
on a single step. Theorem~\ref{thm.multigosper} also improves 
Abramov's universal denominator \cite{Ab}.

Let $K$ denote a field, which for our applications it will be the
field of rational functions with rational coefficients in a finite set
of variables. Let $s$ denote a fixed variable that does not appear in $K$.
As usual, if $p(s), q(s) \in K[s]$ are polynomials, then we write 
$p(s) \left| q(s) \right. $ if $p(s)$ divides $q(s)$.

Consider the equation
\begin{equation}
\lbl{eq.abx}
\sum_{i=0}^{m}\frac{a_i(s)}{b_i(s)}x(s+i) = c(s),
\end{equation}
where $a_i(s), b_i(s), c(s) \in K[s]$ are polynomials, and $\gcd(a_i, b_i)=1$.
Define 
\begin{eqnarray*}
\sigma(s)	&=&	\lcm\left(b_i(s) \left| 0 \le i \le m \right.\right),
\\
\tau_i(s)	&=& \frac{a_i(s-i)}{b_i(s-i)}\sigma(s-i), 0 \le i \le m, 
\\
\hat{\tau}(s)	&=& \gcd\left(\tau_0(s), \ldots, \tau_m(s)\right)
\end{eqnarray*} 
and
\begin{eqnarray}
\lbl{eq.Qs}
Q(s)	&=& \hat{\tau}(s)\prod_{i=0}^{I} \prod_{j=0}^{J_i} \phi_{i}(s+j), 
\text{ where } \phi_i(s) \left| \frac{\tau_0(s)}{\hat{\tau}(s)}\right., 
\phi_i(s+J_i) \left| \frac{\tau_r(s)}{\hat{\tau}(s)}\right., 
\\ \notag
& & \text{ where each } J_i 
\text{ is the maximum of such numbers for the function } \phi_i, 
\\ \notag
& & \text{ and the outer product is over all such } \phi_i,
\\ 
\lbl{eq.Rs}
R(s)&=& \lcm_{0 \le i \le m}\left\{
\frac{b_i(s-i)Q(s)}{\gcd\left(b_i(s-i)Q(s), 
a_i(s-i)\prod_{j \neq i}b_j(s-i)Q(s+j-i) \right)}\right\}
\end{eqnarray} 
Obviously, $R(s), Q(s) \in K[s]$ are polynomials.

\begin{theorem}
\lbl{thm.multigosper}
With the above conventions, every rational solution of \eqref{eq.abx} 
has the form
$$
x(s) = \frac{R(s) P(s)}{Q(s)},
$$
where $P(s) \in K[s]$ is a polynomial.
\end{theorem}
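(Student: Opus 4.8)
The plan is to establish the two divisibility facts that together are equivalent to the statement: first, that the denominator of any rational solution $x(s)$ divides $Q(s)$, so that $N(s):=Q(s)x(s)$ is a polynomial; and second, that $R(s)\mid N(s)$, so that $P(s):=N(s)/R(s)\in K[s]$ and $x=RP/Q$ as claimed. Before either step I would clear the rational coefficients: multiplying \eqref{eq.abx} by $\sigma(s)=\lcm_i b_i(s)$ converts it into the polynomial-coefficient equation $\sum_{i=0}^m \tilde{a}_i(s)\,x(s+i)=\sigma(s)c(s)$, where $\tilde{a}_i(s)=a_i(s)\sigma(s)/b_i(s)\in K[s]$ (a polynomial since $b_i\mid\sigma$). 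A direct check gives $\tau_i(s)=\tilde{a}_i(s-i)$; in particular $\tilde{a}_0=\tau_0$ and $\tilde{a}_m(s)=\tau_m(s+m)$.

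For the denominator bound I would run the pole-chain (dispersion) argument of Abramov \cite{Ab}. Write $x=f/g$ in lowest terms and choose a root $\alpha$ of $g$ that is rightmost, in the sense that $\alpha+1,\dots,\alpha+m$ are not roots of $g$; evaluating the polynomial equation at $s=\alpha$, only the $i=0$ term $\tilde{a}_0(s)x(s)$ carries a pole, so it must be absorbed by $\tilde{a}_0=\tau_0$, forcing the factor of $g$ at $\alpha$ to divide $\tau_0$. Symmetrically, leftmost roots are roots of $\tilde{a}_m(s-m)=\tau_m$. An induction that peels off maximal chains of consecutive roots $\beta,\beta+1,\dots,\beta+J$ then shows each chain has its right end a root of $\tau_0$ and its left end a root of $\tau_m$, and contributes exactly $\prod_{j=0}^{J}\phi(s+j)$ to $g$ for the irreducible $\phi$ vanishing at the right end; roots common to all the $\tau_i$ are collected into $\hat{\tau}=\gcd_i\tau_i$. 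This yields $g\mid Q(s)$ with $Q$ as in \eqref{eq.Qs}. The improvement over Abramov is purely in the bookkeeping: extracting $\hat\tau$ first and then using \emph{maximal} chains $\phi_i$ with maximal exponents $J_i$ avoids over-counting shifted factors, so $Q$ may carry fewer factors than Abramov's product.

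It remains to show $R(s)\mid N(s)$. For a fixed index $i$ I substitute $x=N/Q$ into \eqref{eq.abx}, shift $s\mapsto s-i$ so that the $i$-th summand becomes $\frac{a_i(s-i)}{b_i(s-i)}\frac{N(s)}{Q(s)}$, and multiply through by $b_i(s-i)Q(s)$; the $i$-th term becomes the clean polynomial $a_i(s-i)N(s)$, which I then solve for. Arguing one irreducible $\pi$ at a time, I compare $\pi$-adic valuations. Writing $G_i(s)=a_i(s-i)\prod_{j\neq i}b_j(s-i)Q(s+j-i)$ and $R_i(s)=b_i(s-i)Q(s)/\gcd\!\bigl(b_i(s-i)Q(s),G_i(s)\bigr)$ for the $i$-th entry of the lcm in \eqref{eq.Rs}, one has $v_\pi(b_i(s-i)Q(s))=v_\pi(G_i)+v_\pi(R_i)$; feeding this into the valuation of each remaining term—whose denominators $b_j(s-i)Q(s+j-i)$ divide $G_i$ while its numerator retains the full $b_i(s-i)Q(s)$—shows every term on the right has $v_\pi\ge v_\pi(a_i(s-i))+v_\pi(R_i)$. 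Hence $v_\pi(a_i(s-i)N(s))\ge v_\pi(a_i(s-i))+v_\pi(R_i)$, so $v_\pi(N)\ge v_\pi(R_i)$. As $\pi$ was arbitrary, $R_i\mid N$, and taking the least common multiple over $i$ gives $R\mid N$; thus $P:=N/R\in K[s]$ and $x=RP/Q$.

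The main obstacle is the no-cancellation bookkeeping in both halves. In the denominator bound one must verify that distinct chains and the common factor $\hat\tau$ do not interfere, so that a pole at a chain endpoint is genuinely forced to be a root of $\tau_0$ or $\tau_m$ and is not spuriously cancelled by a neighbouring shift; this is what justifies peeling off chains inductively. In the numerator step the delicate point is that the valuation of the surviving factor $R_i$ must be transported correctly through $b_i(s-i)Q(s)$ even when $\pi$ also divides $a_i(s-i)$—the reason the crude bound would fail is precisely that $b_i(s-i)Q(s)$ absorbs the valuation of $a_i$ via $G_i$. Making these coprimality claims exact—controlling the interaction of shifts of $Q$ with the $a_i,b_i$—is where the definitions \eqref{eq.Qs} and \eqref{eq.Rs} do their work and constitutes the technical heart of the proof.
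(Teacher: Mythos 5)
Your proposal is correct and follows essentially the same route as the paper: the denominator bound is the same Abramov-style chain/singleton analysis (heads of chains forced into $\tau_0$, tails into $\tau_m$, shift-isolated factors into $\hat\tau$), merely phrased via rightmost/leftmost pole peeling instead of the paper's gcd identities on the factorization of $B(s)$, and the numerator step isolates exactly the quantity $R_i$ from \eqref{eq.Rs}. Your $\pi$-adic valuation computation for $R\mid N$ is in fact a more explicit justification of the paper's one-line divisibility claim, but it is the same argument in substance.
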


\begin{proof}
Suppose $x(s) = \frac{A(s)}{B(s)}$, with $\gcd(A(s), B(s))=1$. Then 
$$
\sum_{i=0}^{m}\frac{a_i(s)\sigma(s)}{b_i(s)}\frac{A(s+i)}{B(s+i)} 
= c(s)\sigma(s).
$$

So
$$
\sum_{i=0}^{m}\frac{a_i(s)}{B(s+i)}\frac{A(s+i)\sigma(s)}{b_i(s)} 
= c(s)\sigma(s),
$$
$$
c(s)\sigma(s)\prod_{j=0}^{m}B(s+j) 
= \sum_{i=0}^{m}A(s+i)\tau_i(s+i)\prod_{j \neq i}B(s+j).
$$
Since $\tau_i(s)$ are polynomials for all $i$, it follows that 
$$
B(s+i) \left| A(s+i)\tau_i(s+i)\prod_{j \neq i}B(s+j)\right. .
$$
Since $\gcd(A(s), B(s))=1$, it follows that
$$
B(s) \left| \tau_i(s)\prod_{j \neq i}^{m}B(s+j-i)\right. .
$$ 
Write $B(s) = \prod_{i=0}^{U}\prod_{j=0}^{V_i}f_i(s+j)\prod_{j=0}^W g_j(s)$, 
where $U, V_i, W$ are constants; and $\gcd(g_i(s), g_j(s+L)) = 1$ for any 
$i, j, L$; and $\gcd(g_j(s), f_i(s+l)) = 1$ for any $i, j$, and 
$-m \le l \le m$. We call the functions $g_j(s)$ {\em singletons}, 
and $\{f_i(s+j)\}_{0 \le j \le V_j}$ {\em chains}, in which $f_i(s)$ are the 
{\em heads} of chains, and $f_i(s+V_i)$ the {\em tails} of chains. 
So we are writing $B(s)$ uniquely as a product of chains and singletons.

There are two cases:

Case I: The tail of one chains is always far apart from the head of 
another in $B(s)$, 
i.e., $\gcd(f_i(s+V_i), f_j(s+v)) = 1$ for all $0 \le i,j \le U$ and 
$-m \le v \le m$. Then
 
\begin{eqnarray*}
\left.\prod_{i=0}^{U}f_i(s)\prod_{j=0}^W g_j(s) 
= \frac{B(s)}{\gcd(B(s), \prod_{j=1}^mB(s+j))} \right| 
&&\gcd(B(s), \tau_0(s)),
\\
\left.\prod_{i=0}^{U}f_i(s+V_i)\prod_{j=0}^W g_j(s) 
= \frac{B(s)}{\gcd(B(s), \prod_{j=1}^mB(s-j))} \right| 
&&\gcd(B(s), \tau_m(s)),	
\\
\left.\prod_{j=0}^W g_j(s) = \frac{B(s)}{\gcd(B(s), 
\prod_{j \neq i}B(s+j-i))} \right| &&\gcd(B(s), \tau_i(s)), i \neq 0, m.
\end{eqnarray*}
Thus the singletons have the property

$$
\left.\prod_{j=0}^W g_j(s) \right| \gcd\left(\tau_i, 0 \le i \le m\right).
$$

At the same time, the heads of the chains $f_i(s)$ in $B(s)$ are factors of  
$\tau_0$, and the tails $f_i(s+V_i)$ factors of $\tau_r$. 
Therefore each chain in $B(s)$ factors $\prod_{j=0}^{J_\ell} 
\phi_{\ell}(s+j)$ for some $\ell$. Recalling the definition of $Q(s)$ from
Equation \eqref{eq.Qs}, it follows that 
$B(s)$ divides $Q(s)$.

Case II: The heads and tails of chains are close, 
i.e., $\gcd(f_i(s+V_i), f_j(s+v)) \neq 1$ for some $0 \le i,j \le U$ and 
$-m \le v \le m$. 
In this case, $\prod_{i=0}^{I} \prod_{j=0}^{J_i} \phi_{i}(s+j)$ will 
contain a chain whose head is 
$f_i(s)$ and tail is $f_j(s+V_j)$ in $Q(s)$. This is a longer chain than 
what $B(s)$ really needs, but it still guarantees that $B(s)$ divides $Q(s)$.

So far, this proves that $x(s)= \frac{A(s)}{Q(s)}$ where $A(s) \in K[s]$
is a polynomial. To finish the proof, it suffices to show that $R(s)$ (given
by Equation \eqref{eq.Rs}) divides $A(s)$. 
Since 
$$
\sum_{i=0}^m \frac{a_i(s)}{b_i(s)}\frac{A(s+i)}{Q(s+i)} = c(s),
$$
with $a_i, b_i, c$ polynomials, any polynomial factor that appears only 
once in the $m+1$ denominators on the left-hand side must also divide the 
corresponding numerator, which means
$$
\left. \frac{b_i(s)Q(s+i)}{\gcd\left(b_i(s)Q(s+i), 
a_i(s)\prod_{j \neq i}b_j(s)\prod_{j \neq i}Q(s+j)\right)} 
\right| A(s+i).
$$
\end{proof}

\begin{remark}
\lbl{rem.gosper}
When $m=1$, Theorem~\ref{thm.multigosper}
becomes Gosper's algorithm. Recall that Gosper's algorithm 
tries to find rational solution $x(s)$ such that
$$
\frac{a(s)c(s+\ell)}{b(s)c(s)} x(s+1) - x(s) = 1
$$
for some integer $\ell$. Based on our propositions, we get a chain 
$\prod_{i=0}^{\ell-1}c(s+i)$ as the denominator and $b(s-1)$ as part of the 
numerator, which agrees with Gosper's result.
\end{remark}

\begin{remark}
\lbl{rem.abramov}
Abramov's universal denominator treats the singletons in 
Theorem~\ref{thm.multigosper} as chains of length 1, 
and then tries to find all chains.
However, by picking singletons out first, 
we reduce the possibility of generating redundant chains in the denominator,
because factors in the leading coefficient may mingle with the singletons
and generate unwanted factors in chains.
We illustrate the effect by example.
\end{remark}

\begin{example}
This is Example 1 in \cite{Ab}.
\begin{eqnarray*}
(n+4)(2n+1)(n+2)x(n+3)-(2n+3)(n+3)(n+1)x(n+2)+n(n+2)(2n-3)x(n+1)\\
-(n-1)(2n-1)(n+1)x(n)=0.
\end{eqnarray*}
Abramov's algorithm gives the denominator $u(n)=n^3-n$ for all rational 
function solutions $x(n) \in \BQ(n)$ of the above equation, and computes 
the general polynomial solution $C(2n^2-3n)$.
However our algorithm finds two singletons $(n+1)(n-1)$ and no chains. 
So the denominator is $Q(n)=n^2-1$, which strictly divides $u(n)$. 
\end{example}

\begin{example} 
In one of the intermediate steps for Example~\ref{ex.3}, we get 
{\small
\begin{eqnarray*}
x(n,k_1,k_2) & + &  \frac{-(2k_2^2+k_2+4k_2k_1-6k_2n-3n+k_1+3n^2-6k_1n+2k_1^2)
(n+k_2+2)(-n+k_2+1)}{(k_2+2)^2(k_1+1-n+k_2)(k_1-3n+k_2)}x(n,k_1,k_2+1) 
\\
& + & \frac{(k_1+1-n+k_2)^2(n+k_2+3)(n+k_2+2)(-n+k_2+2)(-n+k_2+1)}{
(k_2+3)^2(k_2+2)^2(k_1+2-n+k_2)(k_1-3n+k_2+1)}x(n,k_1,k_2+2)
\\
& = &
\frac{c(n,k_1,k_2)}{(n+k_2+1)(-n+k_2)\prod_{j=0}^{2}
\left[(k_1-3n+k_2+j)(k_1-n+k_2+1+j)(k_1+1+j)^2\right]},
\end{eqnarray*}
}
to solve for $x(n,k_1,k_2)$ with $c(n,k_1,k_2)$ a polynomial. 
After multiplying both sides by the denominator of the right-hand side, 
we find four singletons $(n+k_2+1)(-n+k_2)(k_1-3n+k_2)(k_1-n+k_2+1)$; no chain 
in the denominator of $x(n,k_1,k_2)$; 
and $k_2^2(k_2+1)^2$ as factors of the numerator of $x(n,k_1,k_2)$. Hence 

$$
x(n,k_1,k_2) = 
\frac{k_2^2(k_2+1)^2}{(n+k_2+1)(-n+k_2)(k_1-3n+k_2)(k_1-n+k_2+1)}P(n,k_1,k_2),
$$
where $P(n,k_1,k_2)$ is a polynomial.
\end{example}

Our method keeps finding the best possible denominators in 
all the steps of the examples discussed in the paper.

\ifx\undefined\bysame
        \newcommand{\bysame}{\leavevmode\hbox
to3em{\hrulefill}\,}
\fi


\begin{thebibliography}{[EMSS]}


\bibitem[Ab]{Ab} S.A. Abramov, 
        {\em Rational solutions of linear difference and 
	$q$-difference equations with polynomial coefficients}, 
	Proc. ISSAC'95, ACM Press, 1995, 285--289.

\bibitem[AZ]{AZ} M. Apagodu and D. Zeilberger, 
        {\em Multi-Variable Zeilberger and Almkvist-Zeilberger Algorithms 
        and the Sharpening of Wilf-Zeilberger Theory},
        Adv. Appl. Math {\bf 37} (2006) 139--152.

\bibitem[Fas]{Fas} Sister Mary Celiner Fasenmyer, 
        {\em Some generalized hypergeometric polynomials},         
        Ph.D. dissertation, University of Michigan, November, 1945.

\bibitem[GS1]{GS1} S. Garoufalidis and X. Sun,
        {\em The non-commutative $A$-polynomial of twist knots},
        preprint 2008 {\tt arXiv:0802.4074}.

\bibitem[GS2]{GS2} \bysame and \bysame,
        Computer data available at
        {\tt www.math.gatech.edu/$\sim$stavros/publications.html}.

\bibitem[Gos]{Gos} R. W. Gosper, 
        {\em Decision procedures for indefinite hypergeometric summation}, 
        Proc. Nat. Acad. Sci, {\bf 75} (1978), 40--42.

\bibitem[PSc]{PSc} P. Paule and C. Schneider, 
        {\em Creative telescoping for hypergeometric double sums}, 
        Preprint, 2005.

\bibitem[PSh]{PSh} P. Paule and M. Schorn, 
        {\em A Mathematica version of Zeilberger's algorithm for proving 
        binomial coefficient identities}, 
        J. Symbolic Comput. {\bf 20} (1995) 673--698.

\bibitem[PWZ]{PWZ} M. Petkov\v sek, H.S. Wilf and D. Zeilberger, 
        {\em $A=B$}, 
        A.K. Peters, Ltd., Wellesley, MA 1996.

\bibitem[Sch]{Sch} C. Schneider, 
        {\em A new Sigma approach to multi-summation},
        Adv. Appl. Math {\bf 34} (2005) 740--767.

\bibitem[Weg]{Weg} K. Wegschaider, 
        {\em Computer generated proofs of binomial multi-sum identities}, 
        Diploma thesis, RISC Linz, Johannes Kepler University, May 1997.

\bibitem[WZ]{WZ} H. Wilf and D. Zeilberger,
        {\em An algorithmic proof theory for hypergeometric (ordinary and
        $q$) multisum/integral identities},
        Inventiones Math. {\bf 108} (1992)  575--633.

\bibitem[Ze]{Ze} D. Zeilberger,
        {\em Sister Celine's technique and its generalizations},
        J. Math. Anal. Appl. {\bf 85} (1982)  114--211.

\end{thebibliography}
\end{document}